\documentclass{article}
\usepackage{amsmath, amssymb, amsthm, color, verbatim,enumerate, cancel, comment, float}
\usepackage{graphicx}
\usepackage{hyperref}
\usepackage{tikz}
\usepackage[numbers]{natbib}

\newtheorem{question}{Question}
\newtheorem{theorem}{Theorem}[section]

\newtheorem{corollary}[theorem]{Corollary}
\newtheorem{proposition}[theorem]{Proposition}
\newtheorem{conjecture}[theorem]{Conjecture}

\title{Locally Resolvable BIBDs and Generalized Quadrangles with Ovoids}
\author{Ryan McCulloch\footnote{Ryan McCulloch, Binghamton University, rmccullo1985@gmail.com}}
\date{July 2024}

\begin{document}

\maketitle

\begin{abstract}
  In this note we establish a 1-to-1 correspondence between the class of generalized quadrangles with ovoids and the class of balanced incomplete block designs that posses a non-triangular local resolution system and have the appropriate parameters.  We present a non-triangular local resolution system for a difference family BIBD construction of Sprott. 
\end{abstract}

{\small
\noindent
{\bf MSC2000\,:} Primary 05B25; Secondary 05B05.

\noindent
{\bf Key words\,:} BIBD, balanced incomplete block design, combinatorial design, block design, design, difference family, generalized quadrangle, ovoid, spread.}

\begin{section}{Introduction}

\vskip 12pt

A finite incidence structure $\mathcal{S} = (\mathcal{P},\mathcal{B},I)$ of points and lines is known as a generalized quadrangle, denoted GQ$(s,t)$ with parameters $s$ and $t$, if it satisfies the following three axioms:

(i) Each point is incident with $1 + t \,\,\, (t \geq 1)$ lines and two distinct points are incident with at most one common line.

(ii) Each line is incident with $1 + s \,\,\, (s \geq 1)$ points and two distinct lines are incident with at most one common point.

(iii) If $x$ is a point and $L$ is a line not incident on $x$, then there is a unique pair $(y,M) \in \mathcal{P} \times \mathcal{B}$ for which $x \, \text{I} \, M \, \text{I} \, y \, \text{I} \, L$.

When $s=t$ we say that the GQ$(s,s)$ has order $s$.  We will use the notation in \cite{GQBook} when describing examples of known generalized quadrangles.

Let $|\mathcal{P}| = v$ and $|\mathcal{B}| = b$.  One can show that 

$$v = (1+s)(1+st), \; \; \; \; \; \; \; \;\; \; \; \; \; b=(1+t)(1+st).$$

Given points $x,y \in \mathcal{P}$, we say that $x$ and $y$ are collinear, and use the notation $x \sim y$ to mean that there is some $L \in \mathcal{B}$ so that $x \, \text{I} \, L \, \text{I} \, y$.

An ovoid, $O$, of a generalized quadrangle $(\mathcal{P},\mathcal{B},I)$ is defined to be a set of points in $\mathcal{P}$ such that every line in $\mathcal{B}$ is incident with exactly one point of $O$.

Not every generalized quadrangle possesses an ovoid.  If a GQ$(s,t)$ does posses an ovoid, $O$, then we have that $|O| = 1 + st$.

\vspace{8 pt}

A $t$-$(v,k,\lambda)$ design consists of a pair $(\mathcal{B}, \mathcal{P})$ where $\mathcal{B}$ is a family of $k$-subsets, called blocks, of a $v$-set of points $\mathcal{P}$ such that every $t$-subset of $\mathcal{P}$ is contained in exactly $\lambda$ blocks.  When $t=2$ and $k < v$, such a design is known as a balanced incomplete block design, or BIBD.  When $k > 2$, the BIBD is said to be nontrivial.  We use the notation BIBD$(v,k,\lambda)$ to refer to a BIBD with parameters $v,k,\lambda$.

For a BIBD, let $|\mathcal{B}| = b$ and let $r$ be the number of blocks in which a point occurs.  The values of $b$ and $r$ can be determined from the other parameters via:

$$vr = bk, \; \; \; \; \; \; \; \;\; \; \; \; \; r(k-1) = \lambda(v-1).$$

A BIBD is said to be \textit{resolvable} if the block set can be partitioned into sets each of which is a partition of the point set.  These sets are called \textit{parallel classes}.  A partition of the blocks into parallel classes is called a \textit{resolution} of the BIBD.

\vskip 15pt

A BIBD with point set $\mathcal{P}$ is said to be \textit{locally resolvable at a point }$p \in \mathcal{P}$ if the family of blocks that contain $p$ can be partitioned into sets so that for each set $S$ in the partition, the set $S' = \{ b - \{p\} \,\, | \,\, b \in S \}$ is a partition of $\mathcal{P} - \{p\}$.  A set $S$ in the partition is called a \textit{parallel class of the BIBD about }$p$.  A partition of the blocks that contain $p$ into parallel classes about $p$ is called a \textit{local resolution of the BIBD about }$p$.  

A BIBD with point set $\mathcal{P}$ is said to be \textit{locally resolvable} if it is locally resolvable at each point $p$ in $\mathcal{P}$.  We define a \textit{local resolution system} of a locally resolvable BIBD to be a collection of local resolutions about $p$ for every $p \in \mathcal{P}$.

A local resolution system of a locally resolvable BIBD is said to be \textit{non-triangular} if it has the following property:

For any three distinct blocks $b, c, d \in \mathcal{B}$, if $b$ and $c$ are in a common parallel class about $p$, and if $b$ and $d$ are in a common parallel class about $q$, for some points $p \neq q$, then $c$ and $d$ are not in a common parallel class about $r$ for any point $r \in \mathcal{P}$.  

Note that distinctness here simply means that $b$, $c$, and $d$ are distinct members of the family $\mathcal{B}$.  The family $\mathcal{B}$ may very well be a multiset that contains repeated blocks, and we consider those to be ``distinct'' .

\end{section}

\begin{section}{Main Theorem}

\begin{theorem}
Let $s > 1$ and $t > 1$ be integers.  
\begin{enumerate}
\item Let $\mathcal{X} = (\mathcal{P},\mathcal{B}, I)$ be a GQ$(s,t)$ that possesses an ovoid $O$.  Let $\mathcal{P}' = O$ and define a a family of blocks $\mathcal{B}'$ as follows: for each $x \in \mathcal{P} - O$, $b_x = \{p \in O \,\, | \,\, x \sim p \}$ is a block in $\mathcal{B}'$.  Given a point $p \in O$, let $l_1, \dots , l_{t+1}$ be the lines of $\mathcal{B}$ incident with $p$; and for each such $l_i$, let $S_{p,i} = \{ b_x \,\, | \,\, x \in \mathcal{P} - O \text{ and } x \text{ I } l_i \}$.  Let $X_p = \{ S_{p,i} \,\, | \,\, 1 \leq i \leq t+1 \}$ and let $\mathcal{C} = \{ X_p \,\, | \,\, p \in O \}$.  Then $\mathcal{Y} = (\mathcal{P}', \mathcal{B}')$ is a locally resolvable BIBD$(1+st,1+t,1+t)$ with $\mathcal{C}$ a non-triangular local resolution system.  Denote $\mathcal{N}(\mathcal{X},O) = (\mathcal{Y},\mathcal{C})$.
\item Suppose $\mathcal{X} =(\mathcal{P}, \mathcal{B})$ is a locally resolvable BIBD$(1+st,1+t,1+t)$ with $\mathcal{C} = \{ X_p \,\, | \,\, p \in \mathcal{P} \}$ a non-triangular local resolution system.  Let $O = \mathcal{P}$ and let $\mathcal{P}' = \mathcal{P} \cup \mathcal{B}$.  Define a line set $\mathcal{B}'$ so that for each $p \in \mathcal{P}$ and each $S \in X_p$, $S \in \mathcal{B}'$.  The incidence relation I is defined as follows: for $p \in \mathcal{P}$ and $S \in \mathcal{B}'$, $p \text{ I } S$ if and only if $S \in X_p$; and for $b \in \mathcal{B}$ and $S \in \mathcal{B}'$, $b \text{ I } S$ if and only if $b \in S$.  Then $\mathcal{Y} = (\mathcal{P}',\mathcal{B}',I)$ is a GQ$(s,t)$ with ovoid $O$.  Denote $\mathcal{M}(\mathcal{X},\mathcal{C}) = (\mathcal{Y},O)$.
\item Let $\mathcal{X}$ be a GQ$(s,t)$ that possesses an ovoid $O$ and let $\mathcal{Y}$ be a locally resolvable BIBD$(1+st,1+t,1+t)$ with $\mathcal{C}$ a non-triangular local resolution system.  Let $\mathcal{N}(\mathcal{X},O)$ and $\mathcal{M}(\mathcal{Y},\mathcal{C})$ be defined as in the previous two items.  Then $\mathcal{M}(\mathcal{N}(\mathcal{X},O)) = (\mathcal{X},O)$ and $\mathcal{N}(\mathcal{M}(\mathcal{Y},\mathcal{C})) = (\mathcal{Y},\mathcal{C})$.
\end{enumerate}
\end{theorem}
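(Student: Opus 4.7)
The plan is to prove the three items in order. Item 1 extracts the BIBD and verifies the non-triangular property from the GQ axioms; item 2 reverses the construction and is the main work, with GQ axiom (iii) as the crux; item 3 consists of two routine verifications of composed correspondences.

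For item 1, I would first verify the BIBD parameters: $v = |O| = 1+st$ by assumption; $|b_x| = 1+t$ because each of the $1+t$ lines through a non-ovoid point $x$ meets $O$ in one point, and distinct lines hit distinct ovoid points (else they share two points, violating axiom (i)); and $\lambda = 1+t$ because for non-collinear $p, q \in O$ one applies GQ axiom (iii) to each of the $1+t$ lines through $q$, obtaining a unique non-ovoid point on that line collinear to $p$. Local resolvability about $p$ follows since the blocks $\{b_x : x \text{ I } l_i,\, x \ne p\}$ give, after removing $p$, $s$ sets of size $t$ partitioning the $st$ points of $O - \{p\}$, with disjointness in one class reducing to axiom (iii) applied to $l_i$ and an alleged shared point $q \ne p$. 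For the non-triangular property, an offending triple $(b_x, b_y, b_z)$ would make $x, y, z$ three distinct non-ovoid points pairwise collinear via lines through three points that include two distinct ovoid points $p \ne q$; three pairwise collinear distinct points in a GQ must lie on one common line (else axiom (iii) produces a duplicate neighbor), and that line would then meet $O$ twice, contradicting the ovoid property.

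For item 2, axioms (i) and (ii) rest on two structural observations: for $p \ne q$ the families $X_p$ and $X_q$ are disjoint (a class in $X_p$ contains at most one block through any fixed other point $q$, whereas any class in $X_q$ has several), and two classes drawn from $X_p$ and $X_q$ share at most one block (otherwise two shared blocks $b, c$ would satisfy $b \cap c = \{p\} = \{q\}$, forcing $p = q$). These quickly give the required incidence counts and at-most-one-common-point conditions. The main obstacle is axiom (iii). For a non-incident pair $x$, $L = S \in X_p$, I would split cases on whether $x \in \mathcal{P}$ or $x \in \mathcal{B}$ and, when $x \in \mathcal{B}$, on whether $p \in x$. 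Uniqueness is direct when $x \in \mathcal{P}$ (the target $y$ is the unique block of $S$ containing $x$) and when $x \in \mathcal{B}$ with $p \in x$ (only $y = p$ works, with $M$ the unique class in $X_p$ through $x$; a rival $y = c \in S$ forces $p \in x \cap c$ and hence $c$ into both $S$ and $M$, but distinct classes in $X_p$ are disjoint). The subtle case is $x \in \mathcal{B}$ with $p \notin x$: two candidates $c_1, c_2 \in S$ in common classes with $x$ about $q_1, q_2$ must have $q_1 \ne q_2$ (otherwise both $c_1, c_2$ lie in the unique class of $X_{q_1}$ through $x$, which then intersects $S$ in two blocks, contradicting the second structural observation), and then the triple $(x, c_1, c_2)$ directly violates the non-triangular hypothesis since $c_1, c_2$ already share the class $S$ about $p$. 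For existence, I would use a double count: summing over incident pairs $(y, M)$, each contributes $s \cdot t$ tuples $(x, L)$ with $x$ on $M$, $L$ through $y$, $x \ne y$, $L \ne M$ (hence $x \notin L$ by axioms (i)-(ii)); this total matches the number of non-incident $(x, L)$ pairs, so the average number of $(y, M)$-completions per non-incident pair is exactly $1$, and combined with the uniqueness bound each pair has exactly one. That $O = \mathcal{P}$ is an ovoid is then immediate since each line $L \in X_p$ contains $p$ as its unique BIBD point.

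Item 3 traces natural bijections through both compositions. For $(\mathcal{X}, O) \mapsto \mathcal{M}(\mathcal{N}(\mathcal{X}, O))$, the maps $p \mapsto p$ on $O$, $x \mapsto b_x$ on $\mathcal{P} - O$, and $l_i \mapsto S_{p,i}$ on lines (where $l_i$ is the $i$-th line through $p \in O$) are bijective by construction and preserve incidence, because $x \text{ I } l_i$ in $\mathcal{X}$ is equivalent to $b_x \in S_{p,i}$ by the definition of $S_{p,i}$. For $(\mathcal{Y}, \mathcal{C}) \mapsto \mathcal{N}(\mathcal{M}(\mathcal{Y}, \mathcal{C}))$, the key observation is that inside $\mathcal{M}(\mathcal{Y}, \mathcal{C})$ a BIBD block $x \in \mathcal{B}$ (viewed as a GQ point outside $O = \mathcal{P}$) is collinear to precisely the BIBD points contained in $x$, since a line joining $x$ and $p$ is a parallel class in $X_p$ through $x$, which exists iff $p \in x$. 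Therefore the reconstructed block $b_x$ equals $x$, and unwinding the definitions shows the reconstructed parallel classes match the original ones.
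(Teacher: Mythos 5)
Your proposal is correct, and on items 1 and 3 it tracks the paper closely (your non-triangularity step in item 1 --- three pairwise collinear points must lie on a common line, which would then meet $O$ twice --- is a mild repackaging of the paper's argument that $p,q,r$ are distinct, hence $l_i, l_j, l_k$ are distinct, hence $x,y,z$ form a triangle). The genuine divergence is in the existence half of GQ axiom (iii) in item 2. The paper proves existence locally in the hard case ($x = b \in \mathcal{B}$ with $o \notin b$): for each of the $1+t$ points $o_i \in b$ it takes the unique class $S_i \in X_{o_i}$ through $b$, notes each $S_i$ contains exactly one block through $o$, and uses non-triangularity a second time to show the induced assignment of the $S_i$'s to classes of $X_o$ is injective; since $|X_o| = 1+t$, pigeonhole forces some $S_i$ to meet $S$, exhibiting the completing flag. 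You instead prove uniqueness in every case (partition properties plus a single application of non-triangularity) and recover existence globally by double counting: each of the $(1+t)(1+st)(1+s)$ flags contributes $st$ tuples, matching exactly the $(1+s)(1+t)(1+st)\,st$ non-incident point--line pairs, so ``at most one completion'' plus the exact count gives ``exactly one.'' Both are sound; the paper's route is constructive and localizes the argument, while yours isolates non-triangularity to one uniqueness step and replaces the pigeonhole with parameter arithmetic --- at the price of needing the incidence counts ($1+t$ lines per point, $1+s$ points per line, hence $r=(1+t)s$, $|X_p|=1+t$, $|S|=s$, and distinctness of classes about distinct points) fully in place beforehand; you assert these ``quickly,'' but they must be written out as the paper does, since your count is vacuous without them. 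One spot to tighten: in your uniqueness argument for $x \in \mathcal{B}$ with $p \in x$, a rival completion $(c, M')$ could have $M' \in X_q$ with $q \neq p$; your parenthetical only dispatches the case $M' \in X_p$, and the sub-case $q \neq p$ needs the observation (the paper's case 2(a)) that two distinct blocks in a common class about $q$ meet only in $q$, so they cannot both contain $p$.
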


\begin{proof}
\textit{Item 1.}
So $|\mathcal{P}'| = 1+st = |O|$, and each block in $\mathcal{B}'$ has size $1+t$.  If $p,q \in \mathcal{P}'$ with $p \neq q$, then $|\{ x \in \mathcal{P} - O \,\, | \,\, x \sim p \text{ and } x \sim q \}| = 1+t$, and so we have $1+t$ blocks that contain $\{p,q\}$.  Note that given a point $p \in O$, a line $l_i \text{ I } p$, and points $x,y \in \mathcal{P} - O$, $x \neq y$, with $x \text{ I } l_i$ and $y \text{ I } l_i$, we have that $b_x \cap b_y = \{ p \}$ (otherwise we create a triangle).  And, furthermore, for any $q \in O$, $q \neq p$, there is some $z \text{ I } l_i$ with $z \sim q$.  Hence we have that $S_{p,i}' = \{ b_x - \{p\} \,\, | \,\, b_x \in S_{p,i} \}$ is a partition of $O - \{p\}$.

We show that $\mathcal{C}$ is non-triangular.  Let $b_x$, $c_y$, $d_z$ be distinct members of $\mathcal{B}'$, and so $x,y,z$ are distinct points of $\mathcal{P} - O$.  Suppose $b_x$ and $c_y$ are in $S_{p,i}$ for some line $l_i$ and some point $p \in O$ with $p \text{ I } l_i \text{ I } x$ and $p \text{ I } l_i \text{ I } y$.  Suppose $b_x$ and $d_z$ are in $S_{q,j}$ for some line $l_j$ and some point $q \in O$ with $q \text{ I } l_j \text{ I } x$ and $q \text{ I } l_j \text{ I } z$, and suppose $p \neq q$.  Since $p$ and $q$ are ovoid points, we have that $l_i$ and $l_j$ are distinct.  Suppose by way of contradiction that $c_y$ and $d_z$ are in $S_{r,k}$ for some line $l_k$ and some point $r \in O$ with $r \text{ I } l_k \text{ I } y$ and $r \text{ I } l_k \text{ I } z$.  We show that $r$ is distinct from $p$ and $q$.  Suppose, say, that $r=p$.  Then $l_i=l_k$.   And so $x,z$ are both incident with two common lines, $l_i$ and $l_j$, a contradiction.  So $p,q,r$ are distinct and hence $l_i$, $l_j$ and $l_k$ are distinct.  But then $x \text{ I } l_i \text{ I } y$, $x \text{ I } l_j \text{ I } z$, and $y \text{ I } l_k \text{ I } z$, a contradiction.

\textit{Proof of item 2.}
It is clear by construction that every line in $\mathcal{B}'$ is incident with exactly one point of $O$.  Since for each $p \in \mathcal{P}$, $X_p$ is a partition of the family of blocks containing $p$, we have that two distinct lines cannot both be incident with a point of $\mathcal{P}$ and a point of $\mathcal{B}$.  And if two distinct lines were both incident with two distinct points of $\mathcal{B}$, say $b$ and $c$, then $\{p,q\} \subseteq b \cap c$ for distinct $p,q \in \mathcal{P}$, contradicting the fact that for each $S \in X_p$, $S' = \{ b - \{p\} \,\, | \,\, b \in S \}$ is a partition of $\mathcal{P} - \{p\}$.  Equivalently, two distinct points cannot be incident with more than one common line.  

Note that for $p \in \mathcal{P}$, $p$ occurs in exactly $r = \dfrac{(1+t)st}{t} = (1+t)s$ blocks of the BIBD.  Since for each $S \in X_p$, $S' = \{ b - \{p\} \,\, | \,\, b \in S \}$ is a partition of $\mathcal{P} - \{p\}$, and since for each $o \in \mathcal{P} - \{p\}$, $\{o,p\}$ is contained in exactly $1+t$ blocks of $\mathcal{B}$, we have that $|X_p| = 1+t$.  It follows that each $S \in X_p$ has size $s$.  If $p \in \mathcal{P}$, then $p$ is incident with exactly $|X_p| = 1+t$ lines in $\mathcal{B}'$.  And if $b \in \mathcal{B}$, then $b$ is incident with exactly $1+t$ lines in $\mathcal{B}'$ since $|\{ X_q \,\, | \,\, q \in b\}|=1+t$.  If $S \in \mathcal{B}'$, then $S$ is incident with exactly $1 + |S| = 1+s$ points.  We now work to establish the third generalized quadrangle axiom.

Let $S \in \mathcal{B}'$ be arbitrary and $o$ be such that $S \in X_o$.  

Case 1 is to consider $p \in \mathcal{P}$ with $p \neq o$.  Since $S' = \{ b - \{o\} \,\, | \,\, b \in S \}$ is a partition of $\mathcal{P} - \{o\}$, there is a unique $b \in S$ with $p \in b$.  And since $X_p$ is a partition of the family of blocks containing $p$, there is a unique $M \in X_p$ with $b \in M$.   Hence $p \text{ I } M \text{ I } b \text{ I } S$.

Case 2(a) is to consider $b \in \mathcal{B}$ where $b \in S'$ for some $S' \in X_o$ with $S \neq S'$.  So $b \text{ I } S' \text{ I } o \text{ I } S$ and $S'$ is the unique line in $X_o$ incident with $b$.  Now if there is some $o' \in \mathcal{P}$ with $o' \neq o$ and there is $T \in X_{o'}$ with $b \text{ I } T \text{ I } b' \text{ I } S$ for some $b' \in S$, then $o \in b$ and $o \in b'$ with $b,b' \in T \in X_{o'}$, contradicting the fact that  $T' = \{ b - \{o'\} \,\, | \,\, b \in T \}$ is a partition of $\mathcal{P} - \{o'\}$.

Finally, case 2(b) is to consider $b \in \mathcal{B}$ where $b$ is not incident with any line in $X_o$.  So $b = \{o_1,...,o_{1+t}\}$ with each $o_i \neq o$.  For each $o_i \in b$, let $S_i$ be the unique line in $X_{o_i}$ incident $b$.  Note that each $S_i$ contains exactly one block that contains $o$.  Now if there was an $(S_i,b')$ pair with $b' \in S \cap S_i$ and if there was an $(S_j,b'')$ pair with $b'' \in S \cap S_j$, and with $S_i \neq S_j$, then this would contradict the fact that $\mathcal{C}$ is non-triangular.  Since the number of $S_i$'s is $1+t$, and $|X_o|=1+t$ and $X_o$ is a partition of the family of blocks that contain $o$, it follows that there exists some $(S_i,b')$ which is the unique pair with $b' \in S \cap S_i$, and $b \text{ I } S_i \text{ I } b' \text{ I } S$.

\textit{Proof of item 3 is clear.}
\end{proof}

\begin{corollary}
Locally resolvable BIBDs that posses a non-triangular local resolution system and with the following $(v, b, r, k,\lambda)$ parameters arise from ovoids in known generalized quadrangles: 
\begin{enumerate}
\item $(q^2 + 1, (q^2+1)q, (q+1)q, q+1, q+1)$ where $q$ is a prime power.
\item $(q^3 + 1, (q^3+1)q^2, (q+1)q^2, q+1, q+1)$ where $q$ is a prime power.
\item $(q^2, (q+1)q^2, (q+1)q, q, q)$ where $q$ is a prime power.
\item $(q^2, (q-1)q^2, (q-1)(q+2), q+2, q+2)$ where $q$ is a power of $2$.
\end{enumerate}
\end{corollary}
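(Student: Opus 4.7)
The plan is to recognize this corollary as a direct application of item 1 of the Theorem to four known families of generalized quadrangles that possess ovoids. First I would invert the parameter formulas from the Theorem: from each listed tuple, read off $k = 1+t$ and $v = 1+st$ to recover $(s,t)$, then check using $r = (1+t)s$ and $b = s(1+st)$ (the latter being the number of non-ovoid points $|\mathcal{P} - O| = (1+s)(1+st) - (1+st)$, which indexes the blocks $b_x$ produced in the construction) that the remaining two parameters agree with those stated. This is a mechanical check: for item 1, $(s,t) = (q,q)$; for item 2, $(s,t) = (q^2,q)$; for item 3, $(s,t) = (q+1,q-1)$; for item 4, $(s,t) = (q-1, q+1)$.

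Next I would invoke, for each family, a standard example from \cite{GQBook}. Item 1 comes from the classical GQ $Q(4,q)$ of order $(q,q)$, which always possesses an ovoid. Item 2 comes from the Hermitian GQ $H(3,q^2)$ of order $(q^2,q)$, whose ovoids correspond under point-line duality to spreads of $Q^-(5,q)$. Item 4 comes from the construction $T_2^*(\mathcal{O})$ of order $(q-1,q+1)$, with $\mathcal{O}$ a hyperoval in $PG(2,q)$ for $q$ a power of $2$, which is known to admit ovoids. Item 3 comes from a generalized quadrangle of order $(q+1,q-1)$ possessing an ovoid, obtained for example as the point-line dual of a known GQ of order $(q-1,q+1)$ admitting a spread. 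For each such GQ-ovoid pair $(\mathcal{X}, O)$, item 1 of the Theorem then directly yields the locally resolvable BIBD with non-triangular local resolution system $\mathcal{N}(\mathcal{X},O)$ having precisely the listed $(v,b,r,k,\lambda)$.

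The main obstacle here is not mathematical but bibliographic: pinning down, especially for item 3, an explicit family of GQs of order $(q+1,q-1)$ admitting ovoids for every prime power $q$, with a clean citation. Once the examples are named, no further structural work is needed, since item 1 of the Theorem has already done the verification that $\mathcal{Y}$ is a BIBD of the claimed strength and that $\mathcal{C}$ is a non-triangular local resolution system. The corollary is therefore essentially a table of named instances of the Theorem rather than a result requiring its own argument.
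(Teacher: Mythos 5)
Your proposal is correct and takes essentially the same approach as the paper, which likewise just instantiates item 1 of the Theorem at named GQs with ovoids: $Q(4,q)$ for item 1, the Hermitian GQ of order $(q^2,q)$ (written $H(3,q)$ in the paper's notation) for item 2, and for item 4 the paper uses $P(Q(4,q),x)$ with $q$ even and $x$ a point in an ovoid of $Q(4,q)$, while your $T_2^*(\mathcal{O})$ is an equally valid known GQ$(q-1,q+1)$ with ovoids. The bibliographic loose end you flag for item 3 is settled in the paper by taking the dual of the Payne GQ $P(W(q),x)$ for any point $x$ of $W(q)$, a GQ$(q+1,q-1)$ known to possess ovoids for every prime power $q$ --- precisely the dual-of-a-spread strategy you sketch.
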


\begin{proof}
For item 1, take $Q(4,q)$ which is a GQ$(q,q)$ and is known to possess ovoids.  For item 2, take $H(3,q)$ which is a GQ$(q^2,q)$ and is known to possess ovoids.  For item 3, take the dual of $P(W(q),x)$ for any point $x$ of $W(q)$.  Such a GQ$(q+1,q-1)$ is known to always possess ovoids.  For item 4, take $P(Q(4,q),x)$ where $q$ is a power of 2 and $x$ is a point in an ovoid of $Q(4,q)$.  It is known that all of the points of $Q(4,q)$ are regular when $q$ is a power of 2, and it is known that $P(Q(4,q),x)$ possesses an ovoid when $x$ is a point in an ovoid of $Q(4,q)$.
\end{proof}

\end{section}

\begin{section}{Examples}
We begin this section with a BIBD coming from a difference family construction due to Sprott\cite[Theorem 2.1]{Sprott1956} (see \cite{AbBur} for terminology related to difference families).

Let $p$ be a prime and let $m$ and $\lambda$ be such that $m(\lambda-1) = p^a$.  Let $x$ be a primitive element of $GF(p^a)$.  We define $D$, a set of base blocks, by $D = \{ (0, x^i, x^{i+m}, x^{i+2m}, \dots, x^{i+(\lambda-2)m}) \,\, | \,\, 0 \leq i \leq m-1 \}$.  Sprott shows that $D$ satisfies the conditions for being a difference family.  We shall refer to the associated BIBD with parameters $(v,k,\lambda)=(p^a,\lambda,\lambda)$ as $\text{Sprott}(p^a,\lambda)$.

\begin{proposition}
Let $q$ be a power of $2$ and let $x$ be a primitive element of $GF(q^2)$.  Let $\mathcal{X} = \text{Sprott}(q^2,q+2)$.  Let $S_{0,0} = \{ (0, x^i, x^{i+(q-1)},x^{i+2(q-1)}, \dots, \\ x^{i+q(q-1)}) \,\, | \,\, 0 \leq i \leq q-2 \}$ be the set of base blocks.  For $0 \leq j \leq q$, let $S_{0,j+1} = \{ x^{j+(q+1)i} \cdot (0, 1, x^{q-1}+1,x^{2(q-1)}+1, \dots, x^{q(q-1)}+1 ) \,\, | \,\, 0 \leq i \leq q-2 \}$.  Let $X_0 = \{ S_{0,i} \,\, | \,\, 0 \leq i \leq q+1 \}$.  For $v \in GF(q^2)$, let $X_v$ be the translation of $X_0$ by $v$ (i.e. $X_0 + v$).  Let $\mathcal{C} = \{ X_v \,\, | \,\, v \in GF(q^2) \}$.  Then $\mathcal{C}$ is the uniquely determined local resolution system for $\mathcal{X}$, and $\mathcal{C}$ is non-triangular.
\end{proposition}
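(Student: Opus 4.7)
The plan is to verify directly that $\mathcal{C}$ is a local resolution system, and then to deduce both uniqueness and non-triangularity by identifying $\mathcal{X}$ with the BIBD produced by applying $\mathcal{N}$ to an ovoid in $P(Q(4,q),x)$, as in the corollary. The central algebraic fact underlying the calculations is that, for $q$ a power of $2$, the norm-one subgroup $H = \langle x^{q-1}\rangle$ of order $q+1$ intersects $GF(q)^*$ trivially (since $\gcd(q-1,q+1)=1$), and Frobenius acts on $H$ by $h \mapsto h^{-1}$.

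First I would confirm that each $S_{0,i}$ is a parallel class about $0$. For $S_{0,0}$ this is immediate, since the nonzero parts of the base blocks are precisely the cosets $x^i H$ of $H$ partitioning $GF(q^2)^*$. For $0 \le j \le q$, I write $B_0 = D_0 + 1 = \{0,1\} \cup \{h+1 : h \in H \setminus \{1\}\}$ and note that $\{x^{(q+1)i} : 0 \le i \le q-2\} = GF(q)^*$, so $S_{0,j+1} = \{x^j \beta B_0 : \beta \in GF(q)^*\}$. The required partition property reduces to ruling out $\gamma b_2 = b_1$ for $b_1,b_2 \in B_0 \setminus \{0\}$ and $\gamma \in GF(q)^* \setminus \{1\}$, and here the Frobenius identity $(h+1)^q = h^{-1}+1$ enters: it forces $(h_1+1)/(h_2+1) \in GF(q)$ to imply $h_1 = h_2$. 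A count then shows $|X_0|(q-1) = r$, so that once pairwise disjointness of the $S_{0,i}$ is established (via the same rigidity: no $x^j \beta B_0$ is a base block, because $B_0 \setminus \{0\}$ is not a multiplicative coset of $H$, and the multiplicative stabilizer of $B_0$ in $GF(q^2)^*$ is trivial), $X_0$ is a local resolution about $0$. Translation by $v$ produces the remaining $X_v$.

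For uniqueness and non-triangularity I would identify $\mathcal{X}$ with $\mathcal{N}(P(Q(4,q),x),O)$ for a suitable ovoid $O$; then the non-triangular local resolution system produced by $\mathcal{N}$, transported across the identification, must equal $\mathcal{C}$, because any parallel class about the ovoid point $0$ is characterized intrinsically as the family of blocks corresponding to lines through $0$ in the ambient GQ. Non-triangularity of $\mathcal{C}$ then follows from item 1 of the main theorem, and uniqueness is a by-product of the same characterization. The main obstacle is the explicit isomorphism $\text{Sprott}(q^2,q+2) \cong \mathcal{N}(P(Q(4,q),x),O)$: one needs a concrete coordinate model for $P(Q(4,q),x)$ together with its ovoid, and must show that the two natural line-types through an ovoid point correspond, respectively, to the base blocks (yielding $S_{0,0}$) and to the scaled translates $x^j \beta B_0$ (yielding $S_{0,j+1}$). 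If one preferred to avoid the identification, non-triangularity could instead be checked directly by case analysis on triples $(b,c,d)$ drawn from the $S_{p,i}$, but each case still reduces to the triviality of $H \cap GF(q)^*$ and the Frobenius computation on $h+1$, so the essential ingredients are the same.
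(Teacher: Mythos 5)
The paper offers no proof of this proposition (it is left as an exercise), so your attempt stands on its own merits. Its first half is essentially sound: the observation that $\{x^{(q+1)i}\} = GF(q)^*$, the Frobenius identity $(h+1)^q = h^{-1}+1$ (which turns $(h_1+1)/(h_2+1) \in GF(q)$ into $(h_1+h_2)(h_1+1)(h_2+1)=0$, forcing $h_1=h_2$), and the count $|X_0|(q-1) = (q+2)(q-1) = r$ do combine to show each $S_{0,i}$ is a parallel class about $0$ and that $X_0$ is a local resolution there. Two assertions you make in passing do need actual proofs: that $B_0\setminus\{0\}$ is not a multiplicative coset of $H$, and that the stabilizer of $B_0$ in $GF(q^2)^*$ is trivial. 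Both reduce, via $N(h+1)=(h+1)^{q+1}=h+h^{-1}$, to the fact that $h+h^{-1}=1$ only for cube roots of unity, which fails to produce a stabilizing element once $q \ge 4$. Note also that the non-coset fact is exactly what shows the design is \emph{simple} (no repeated translates $D_i + c = D_{i'}$), which your counting argument tacitly uses, since a local resolution of a multiset of blocks must respect multiplicity.

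The second half has a genuine gap. First, the identification $\text{Sprott}(q^2,q+2) \cong \mathcal{N}(P(Q(4,q),x),O)$, which you acknowledge as ``the main obstacle,'' is never constructed; the paper itself only reports verifying this isomorphism by computer for some $q$, and logically it arises as a \emph{consequence} of the proposition (via item 2 of Theorem 2.1 applied to $\mathcal{C}$), so taking it as an input risks circularity unless you build explicit coordinates independently — that construction would be the bulk of the proof. Second, and more fundamentally, even granting the isomorphism, uniqueness does not follow: transporting one non-triangular system across an isomorphism establishes the existence of a system, while uniqueness quantifies over \emph{all} local resolution systems of the BIBD, which need not be non-triangular a priori, so the GQ machinery cannot be invoked for them. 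Your proposed intrinsic characterization would require that two blocks through $0$ lie in a common parallel class if and only if they meet only at $0$, and this is false: the nonzero part of $cB_0$ meets only those cosets $x^iH$ whose norm lies in $N(c)\bigl(\{1\} \cup \{h+h^{-1} : h \in H\setminus\{1\}\}\bigr)$, a set of at most $1+q/2 < q-1$ values for $q \ge 8$, so some base block $D_i$ intersects some $cB_0$ only in $0$ even though they lie in different classes of $X_0$. Hence proving uniqueness requires ruling out ``mixed'' exact covers of $GF(q^2)^*$ by punctured blocks through $0$, an argument absent from both of your routes; likewise the fallback direct case analysis for non-triangularity is only named, not carried out, so neither claimed conclusion of the proposition is actually established by the proposal as written.
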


\begin{proof}
Exercise.
\end{proof}

We verified for some $q$ a power of $2$ that the GQ$(q-1,q+1)$ resulting from Sprott$(q^2,q+2)$ is isomorphic to the Payne GQ $P(W(q),x)$.

One can see that for $q$ a power of a prime ($q$ even or odd), $\text{Sprott}(q^2,q)$ consists of $q$ copies of a Desarguesian affine plane of order $q$.  In this case, a local resolution system clearly exists (put multiples of a block into different parallel classes).  We were able to find, through a computer search, a non-triangular local resolution system for the first few Desarguesian affine planes of order $q$ (up to at least $q=16$), and we found that the resulting GQs are isomorphic to the dual of the Payne GQ $P(W(q),x)$.  We wonder if two different non-triangular local resolution systems (for the same BIBD) could result in non-isomorphic GQs.  

\begin{proposition} Let $s > 1$ and $t > 1$ be integers.  A BIBD $\mathcal{X}$ consisting of $1+t$ copies of a $BIBD(1+st,1+t,1)$ possesses a non-triangular local resolution system if and only if a GQ$(s,t) = (\mathcal{P},\mathcal{B})$ possesses an ovoid $O$ so that for each point $x \in \mathcal{P} - O$, there exists a point $y \in \mathcal{P} - O$, $x$ and $y$ not collinear, so that the pair $(x,y)$ is regular and so that the trace of $x$ and $y$ is contained in $O$.
\end{proposition}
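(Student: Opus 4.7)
The strategy is to invoke Theorem 2.1 and then reduce the proposition to a single geometric lemma translating block multiplicities in $\mathcal{B}'$ into regular pairs with trace in the ovoid. Since every pair of ovoid points already lies in exactly $1+t$ blocks of $\mathcal{N}(\mathcal{X},O)$, the BIBD is $1+t$ copies of a $(1+st,1+t,1)$-BIBD if and only if each distinct block in $\mathcal{B}'$ has multiplicity exactly $1+t$, i.e.\ each fiber of the map $x \mapsto b_x$ has size $1+t$.

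The key lemma I would establish is: for distinct $x, y \in \mathcal{P} - O$ with $b_x = b_y$, one has (a) $x \not\sim y$, (b) $\{x,y\}^\perp = b_x \subseteq O$, and (c) $\{z \in \mathcal{P} - O : b_z = b_x\} = \{x,y\}^{\perp\perp}$. For (a), if $x \sim y$ along a line $\ell$, the ovoid axiom gives $|\ell \cap O|=1$, and any $p \in b_x \cap b_y$ off $\ell$ would yield two distinct lines from $p$ meeting $\ell$ (one through $x$, one through $y$), violating axiom (iii); hence $|b_x \cap b_y| \le 1 < 1+t$, a contradiction. Part (b) then follows from $b_x \subseteq \{x,y\}^\perp$ combined with $|b_x| = 1+t = |\{x,y\}^\perp|$. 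For (c), any $z \in \{x,y\}^{\perp\perp}$ must lie outside $O$ (otherwise $z$ would be an ovoid point collinear with $\ge 2$ distinct ovoid points of $\{x,y\}^\perp$), and then $z$ is collinear with every point of $b_x$, so $b_z = b_x$ by cardinality; the reverse inclusion is immediate.

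Given the lemma, both directions fall out. In ($\Leftarrow$), I would pick for each $x$ a $y$ satisfying the hypothesis; regularity gives $|\{x,y\}^{\perp\perp}| = 1+t$, so by (c) every block of $\mathcal{B}'$ has multiplicity exactly $1+t$, and Theorem 2.1 supplies the non-triangular local resolution system on the resulting $(1+t)$-fold Steiner system. In ($\Rightarrow$), I would apply Theorem 2.1 to obtain $(\mathcal{Y}, O) = \mathcal{M}(\mathcal{X}, \mathcal{C})$; since $\mathcal{X} = \mathcal{N}(\mathcal{Y},O)$ is $1+t$ copies of a Steiner system, every fiber of $z \mapsto b_z$ has size $1+t$, so each $x$ admits a distinct $y$ with $b_y = b_x$, and (a)--(c) furnish the required non-collinear regular pair with trace in $O$. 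The hard part will be part (c) of the geometric lemma: identifying the span $\{x,y\}^{\perp\perp}$ with the full multiplicity fiber of $b_x$ requires delicately using the ovoid axiom to exclude ovoid points from the span, and then leveraging the equality $|b_x| = |\{x,y\}^\perp|$ to upgrade inclusion to equality.
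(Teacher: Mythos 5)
Your proof is correct, and it takes exactly the route the paper indicates: the paper offers no formal proof of this proposition, only the remark that ``each block of the ovoid is the trace of a regular pair $(x,y)$, with the multiplicity of the block equal to $1+t$,'' which is precisely your key lemma (parts (b) and (c) identifying $b_x = \{x,y\}^\perp$ and the multiplicity fiber with $\{x,y\}^{\perp\perp}$). Your write-up supplies the details the paper omits --- the collinearity exclusion in (a), the span-equals-fiber argument in (c), and the reduction to Theorem 2.1 via the multiplicity-$(1+t)$ characterization of $1+t$ copies of a $(1+st,1+t,1)$-design --- and all of these steps are sound.
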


In fact each block of the ovoid is the trace of a regular pair $(x,y)$, with the multiplicity of the block equal to $1+t$.

Proposition 3.2 demonstrates that for such an isomorphism class of BIBD, the assumption of the existence of a non-triangular local resolution system (without explicitly defining it) can lead to restrictions on the structure of the resulting GQ.  

We verified, for the first few values of $q$, that such ovoids as in Proposition 3.2 do exist in the dual of the Payne GQ $P(W(q),x)$, and we also used a computer to find all such ovoids (there aren't too many).  Each such ovoid resulted in a BIBD that consists of $q$ copies of a Desarguesian affine plane of order $q$.

\begin{conjecture}
A BIBD that consists of $q$ copies of a non-Desarguesian affine plane of order $q$ does not possess a non-triangular local resolution system.
\end{conjecture}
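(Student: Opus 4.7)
The plan is to argue by contradiction using Proposition 3.2. Suppose a BIBD $\mathcal{X}$ consisting of $q$ copies of a non-Desarguesian affine plane $\pi$ of order $q$ admits a non-triangular local resolution system $\mathcal{C}$. With $s=q+1$ and $t=q-1$, Proposition 3.2 together with the Main Theorem produces a GQ$(q+1,q-1)$, say $\mathcal{G}=(\mathcal{P},\mathcal{B})$, with an ovoid $O$ such that every external point $x \in \mathcal{P}-O$ lies in a regular pair whose trace is contained in $O$. The distinct traces of these regular pairs, each occurring with multiplicity $1+t=q$, recover the block multiset of $\mathcal{X}$; hence the $q(q+1)$ distinct traces must realize the line set of $\pi$ on the point set $O$.

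First I would try to constrain the ambient $\mathcal{G}$. Every currently known GQ$(q+1,q-1)$ is a dual Payne quadrangle $P(W(q),x)^{\ast}$, and the author's computations in small cases show that the only ovoids in such a GQ satisfying the hypothesis of Proposition 3.2 yield Desarguesian planes. So the first step is either (i) to appeal to a classification result for GQ$(q+1,q-1)$ of the relevant order, reducing to $\mathcal{G} \cong P(W(q),x)^{\ast}$, or (ii) to argue combinatorially that the existence of a ``trace-in-$O$'' regular pair through every external point forces $\mathcal{G}$ to be a dual Payne GQ.

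Next I would dualize into $P(W(q),x)$ and then into $W(q)$. The ovoid $O$ becomes a spread $\Sigma$, regular pairs become hyperbolic pencils of lines, and the trace-in-$O$ condition becomes: for every line $m \notin \Sigma$ there is a line $n \notin \Sigma$ skew to $m$ whose hyperbolic pencil with $m$ lies in $\Sigma$. The affine plane $\pi$ is then recovered as the incidence geometry of such pencils on $\Sigma$. Using the $GF(q)$-coordinatization of $W(q)$, one would attempt to show that this pencil geometry is itself coordinatized by $GF(q)$, hence is $AG(2,q)$, contradicting the assumption that $\pi$ is non-Desarguesian.

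The main obstacle, and the reason this is stated as a conjecture rather than a theorem, is precisely this last implication: even granting $\mathcal{G} \cong P(W(q),x)^{\ast}$, ruling out a hypothetical non-Desarguesian ``spread-pencil'' geometry appears to require a genuine rigidity theorem about hyperbolic pencils inside spreads of $W(q)$. A direct attack on the BIBD side is possible in principle — a local resolution assigns, for each point $p$ of $\pi$ and each line $\ell$ through $p$, a bijection between the $q$ copies of $\ell$ and the $q$ parallel classes about $p$, and non-triangularity becomes a compatibility condition on triples of such bijections along triangles of $\pi$ — but the hard part will be showing that any consistent system of such bijections forces the coordinatizing loop of $\pi$ to be a field. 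This conclusion is in the spirit of the classical algebraization theorems for projective planes (Desargues, Hessenberg, Moufang), but does not seem to follow directly from any of them, which is why I expect substantial new input is needed to settle the conjecture.
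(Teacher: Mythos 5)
The statement you are trying to prove is stated in the paper as a \emph{conjecture}: the paper offers no proof, and in fact the author explicitly writes that the conjecture ``may indeed be false,'' suggesting a computer search over the $88$ affine planes of order $16$ as a way to test it. So there is no paper argument to compare against, and your proposal should be judged as a standalone attempt. As such, it is a research program rather than a proof, and you candidly say so yourself; but it is worth naming precisely where the two unbridged gaps lie. The first is your step (i)/(ii): Proposition 3.2 together with the main theorem only yields that \emph{some} GQ$(s,t)$ with $s=q+1$, $t=q-1$ possesses an ovoid with the trace-in-$O$ regularity property. There is no classification of GQ$(q+1,q-1)$ to invoke, and no combinatorial argument is known (or sketched by you) that the trace-in-$O$ condition forces the GQ to be a dual Payne quadrangle $P(W(q),x)^{\ast}$. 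Indeed the paper's Question 2 explicitly entertains the possibility that such a construction could produce a \emph{new} GQ$(s,t)$ with ovoid; if a non-Desarguesian plane gave rise to a previously unknown quadrangle, your entire reduction collapses while the conjecture's hypothesis is still satisfied --- or refuted. The author's computational evidence (all Proposition 3.2 ovoids found in duals of $P(W(q),x)$ for small $q$ yield Desarguesian planes) constrains only the known quadrangles for small $q$, not the hypothetical ones your contradiction argument must handle.

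The second gap you identify correctly: even granting $\mathcal{G} \cong P(W(q),x)^{\ast}$, the algebraization step --- showing that the incidence geometry of hyperbolic pencils inside a spread of $W(q)$ must be coordinatized by $GF(q)$ --- is asserted, not argued, and as you note it does not follow from Desargues/Hessenberg/Moufang-type theorems, which algebraize planes from configurational hypotheses rather than from an embedding in a symplectic quadrangle. One smaller point of caution: your direct BIBD-side reformulation (bijections between the $q$ copies of each line through $p$ and the $q$ parallel classes about $p$) miscounts slightly --- the BIBD consists of $1+t = q$ copies of the plane, but each point of the plane lies on $q+1$ lines and $|X_p| = 1+t$ forces $q+1$ parallel classes about $p$ of size $s = q+1$ each, so the combinatorics of the local resolution is not a simple copies-to-classes matching and would need to be set up more carefully before the ``compatibility along triangles'' condition can even be written down. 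In short: your proposal correctly maps the terrain and honestly flags both obstructions, but neither obstruction is overcome, so the conjecture remains exactly as open after your argument as before it; given the author's own doubt about its truth, effort might be better spent on the suggested order-$16$ computer search than on a proof attempt.
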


This conjecture may indeed be false, and if so it would be quite interesting.  One can look for ovoids as in Proposition 3.2 existing in the duals of some of the unusual Payne GQs $P(T_2(O),x)$ (we only checked in the duals of some of the $P(W(q),x)$).  One can try each of the $88$ affine planes of order $16$ and use a computer to search for a non-triangular local resolution system.  (We note that the BIBD table in \cite{MatRos} indicates that there are $189$ affine planes of order 16.  This is incorrect.  There are, in fact, 88 non-isomorphic affine planes of order 16 coming from the 22 known projective planes.)

\end{section}

\begin{section}{Conclusion}

We conclude with two questions based on our main theorem.

\begin{question}
Does a known GQ$(s,t)$ with some ovoid $O$ result in a previously unknown BIBD$(1+st,1+t,1+t)$?
\end{question}

\begin{question}
Given a known BIBD$(1+st,1+t,1+t)$, can one construct a non-triangular local resolution system for the BIBD that results in a new ovoid for a known GQ$(s,t)$, or, perhaps, that results in a new GQ$(s,t)$ with ovoid?
\end{question}

\end{section}

\bigskip\noindent {\bf Acknowledgements.} The author is grateful to G. Eric Moorhouse, to Eric Swartz, and to the anonymous referee for helpful suggestions during the writing of this paper.  The author also thanks Austin C. Bussey.

\bibliographystyle{plainnat}
\bibliography{ref}

\end{document}